\newcommand{\R}{\mathbb{R}} 
\newcommand{\C}{\mathbb{C}} 
\newcommand{\Z}{\mathbb{Z}} 
\newcommand{\N}{\mathbb{N}} 
\newcommand{\vG}{\varGamma}
\newcommand{\T}{\mathscr{T}}
\newcommand{\K}{\mathscr{K}}
\newcommand{\Hel}{\mathrm{H}}
\newcommand{\B}{\mathcal{B}}
\newcommand{\proj}{\mathbb{P}}
\newcommand{\Proj}{\mathcal{P}}
\newcommand{\wh}[1]{\widehat{#1}}
\newtheorem{theorem}{Theorem}[section]
\newtheorem{proposition}{Proposition}[section]
\newtheorem{lemma}{Lemma}[section]
\newtheorem{example}{Example}[section]
\newtheorem{remark}{Remark}[section]
\begin{document}

\title[Time-frequency data approximations]{Data approximation with time-frequency invariant systems}

\author[D. Barbieri]
{Davide Barbieri }
\address{Davide Barbieri and Eugenio Hern\'andez at Universidad Aut\'onoma de Madrid, Campus Cantoblanco, 28049 Madrid, Espa\~na}
\email {davide.barbieri@uam.es, eugenio.hernandez@uam.es}
\author[C. Cabrelli]{C. Cabrelli}
\address{Carlos Cabrelli and Ursula Molter at Departamento de Matem\'atica, Universidad de Buenos Aires, and Instituto de Matem\'atica ``Luis Santal\'o'' (IMAS-CONICET-UBA), 1428 Buenos Aires, Argentina}
\email{carlos.cabrelli@gmail.com, umolter@gmail.com}
\author[E. Hern\'andez]{E. Hern\'andez}
\author[U. Molter]{U. Molter}

\begin{abstract}
In this paper we prove the existence of a time-frequency space that best approximates a given finite set of  data. Here best approximation is in the least square sense, among all time-frequency spaces with no more than a prescribed number of generators. We provide a formula to construct the generators from the data and give the exact error of approximation. The setting is in the space of square integrable functions defined on a second countable LCA group and  we use  the Zak transform as the main tool.
\end{abstract}

\keywords{Time-frequency space; Eckart-Young theorem; LCA groups; Zak transform}
\maketitle

\numberwithin{equation}{section}

\section{Introduction and main result}\label{Sec1}

Time-frequency systems, also called Gabor or Weyl-Heisenberg systems in the literature, are used extensively in the theory of communication, to analyze continuous signals, and to process digital data such as sampled audio or images. 

Time-frequency spaces try to represent features of both a function and its frequencies by decomposing the signal into time-frequency atoms given by modulations and translations of a finite number of functions \cite{Gro01}. If one looks at a musical score, on the horizontal axis the composer represents the time, and on the vertical axis the ``frequency'' given by the amplitude of the signal at that instant. Finding {\em sparse representations} (i.e. spaces generated by a small set of functions) will be useful for example in classification tasks.

In numerical applications to time-dependent phenomena, one often encounters uniformly sampled signals of finite length, i.e. vectors of $d$ elements, such as audio signals with a constant sampling frequency. In this case the most direct approach is to consider Fourier analysis on the cyclic group $\Z_d$.

To include a large variety of situations, our setting will be that of a locally compact abelian (LCA) group. The general construction developed in this paper will be specialised to the cyclic group $\Z_d$ in Example \ref{ex:finite}.

In this paper $G = (G,+)$ will be a second countable LCA group, that is, an abelian group endowed with a locally compact and second countable Hausdorff topology for which $(x,y) \mapsto x-y$ is continuous from $G\times G$ into $G$. We denote by $\widehat G$ the dual group of $G$, formed by the characters of $G$: an element $\alpha \in \widehat G$ is a continuous homomorphism from $G$ into $\mathbb T = \{z\in \C: |z|=1\}$. The action of $\alpha$ on $x\in G$ will be denoted by $(x,\alpha):= \alpha(x),$ to reflect the fact that the dual of $\widehat G$ is isomorphic to $G$, and therefore $x$ can also act on $\alpha$. For $\alpha_1, \alpha_2 \in \widehat G$ the group law is denoted by $\alpha_1\cdot \alpha_2$, so that $(x, \alpha_1\cdot \alpha_2) = (x, \alpha_1)(x, \alpha_2).$

A uniform lattice, $L\subset G$, is a subgroup of $G$ whose relative topology is the discrete one and for which $G/L$ is compact in the quotient topology. The annihilator of $L$ is $L^\perp = \{\alpha \in \widehat G : (\ell, \alpha)=1 \,\  \forall \ell \in L\}.$
Since $L^\perp \approx \widehat {(G/L)}$ (\cite{Rud92}, Theorem 2.1.2) and $G/L$ is compact, $L^\perp$ is discrete (\cite{Rud92}, Theorem 1.2.5). In particular, since $G$ is second countable, $\widehat{G}$ is also second countable, so both discrete groups $L$ and $L^\perp$ are countable.

Lel $L$ be a uniform lattice in the LCA group $G$ and $\B \subset L^\perp$ be a uniform lattice in the dual group $\widehat G.$ For $f\in L^2(G), \, \ell \in L$, and $\beta \in \B$ let $T_\ell f (x) = f(x-\ell), x\in G,$ be the translation operator, and $M_\beta f (x) = (x,\beta)f(x), x\in G$, be the modulation operator. The collection
$$
\{ T_\ell M_\beta f: \ell \in L, \beta \in \B \},
$$ 
is the time-frequency system generated by $f\in L^2(G).$

Since $\B \subset L^\perp$, we have $T_\ell M_\beta f = M_\beta T_\ell f$ for all  $f\in L^2(G),\, \ell \in L$, and $\beta \in \B$. Thus $\Pi(\ell, \beta):= T_\ell M_\beta$ is a unitary representation of the abelian group $\Gamma := L\times \B,$ with operation $(\ell_1, \beta_1)\cdot (\ell_2, \beta_2) = (\ell_1 + \ell_2, \beta_1 \cdot \beta_2)$, in $L^2(G).$

A closed subspace $V$ of $L^2(G)$ is said to be $\Gamma$-invariant (or time-frequency invariant) if for every $f\in V$, $\Pi(\ell, \beta)f \in V$ for every $(\ell, \beta)\in \Gamma.$ All $\Gamma$-invariant subspaces $V$ of $L^2(G)$ are of the form
$$
V=S_\Gamma(\mathcal A) := \overline{\mbox{span}\{ T_\ell M_\beta \varphi: \varphi \in \mathcal A, (\ell,\beta)\in \Gamma \}}^{L^2(G)}
$$
for some countable collection $\mathcal A$ of elements of $L^2(G).$ If $\mathcal A$ is a finite collection we say that $V=S_\Gamma(\mathcal A)$ has finite length, and $\mathcal A$ is a set of generators of $V$. We call the length of $V$, denoted length($V$), the minimum positive integer $n$ such that $V$ has a set of generators with $n$ elements.

We now state our approximation problem. Let $\mathcal F = \{ f_1, f_2, ..., f_m\} \subset L^2(G)$ be a set of functional data. Given a closed subspace $V$ of $L^2(G)$ define
\begin{equation} \label{Eq1-1}
  \mathcal E(\mathcal F; V):= \sum_{j =1}^m \| f_j - \proj_V f_j\|_{L^2(G)}^2\,
\end{equation}
as the error of approximation of $\mathcal F$ by $V$, where $\proj_V$ denotes the orthogonal projection of $L^2(G)$ onto $V.$

Is it possible to find a $\Gamma$-invariant space of length at most $n < m$ that {\em best} approximates our functions, in the sense that 
$$
\mathcal E(\mathcal F; S_\Gamma\{ \psi_1, ... , \psi_n \}) \leq \mathcal E(\mathcal F; V)
$$
for all $\Gamma$-invariant subspaces $V$ of $L^2(G)$ with length($V$) $\leq n$?

This question is relevant in applications. For example, if $\{f_1, \dots, f_m\}$ are audio signals, {\em the best} $\Gamma$-invariant space  provides a time-frequency {\em optimal} model to represent these signals.

The answer to this question is affirmative, and is given by the main theorem of this work.
\begin{theorem}\label{Thm:main}
Let $G$ be a second countable LCA group, $L$ and $\B$ be uniform lattices in $G$ and $\widehat G$ respectively, with $\B\subset L^\perp$. For each set of functional data $\mathcal F = \{ f_1, f_2, ..., f_m\} \subset L^2(G)$ and each $n\in \N, \, n < m,$ there exists $\{ \psi_1, ... , \psi_n  \} \subset L^2(G)$ such that
$$
\mathcal E(\mathcal F; S_\Gamma\{ \psi_1, ... , \psi_n \}) \leq \mathcal E(\mathcal F; V)
$$
for all $\Gamma$-invariant subspaces $V$ of $L^2(G)$ with length($V$) $\leq n$.
\end{theorem}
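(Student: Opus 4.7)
The plan is to use the Zak transform to diagonalise the unitary representation $\Pi$ of $\Gamma$, thereby reducing the approximation problem to a pointwise finite-dimensional best approximation problem solvable by the classical Eckart-Young theorem, and then to reassemble the pointwise minimisers into a $\Gamma$-invariant subspace by means of a measurable selection argument. This mirrors the strategy used for shift-invariant spaces, but the role of the fiber decomposition under the Fourier transform is taken by the Zak transform.

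First, I would invoke (presumably developed in the preceding sections) the fact that the Zak transform $Z$ associated with the pair $(L,\B)$ is a unitary isomorphism from $L^2(G)$ onto a direct integral $\int_\Omega^{\oplus}\Hel\,d\mu(\omega)$, where $\Omega$ is a measurable fundamental domain in $\widehat G$ for the dual action and $\Hel$ is an auxiliary separable Hilbert space. Under $Z$, each $\Pi(\ell,\beta)$ becomes multiplication by a character, so closed $\Gamma$-invariant subspaces $V\subset L^2(G)$ are in bijection with measurable range functions $J:\Omega\to \{\text{closed subspaces of }\Hel\}$; the projection $\proj_V$ is carried to the fiberwise projection $\proj_{J(\omega)}$, and length$(V)$ equals the essential supremum of $\dim J(\omega)$. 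Plancherel then rewrites the error as
\[
\mathcal E(\mathcal F;V) \;=\; \sum_{j=1}^m \int_\Omega \bigl\|Zf_j(\omega) - \proj_{J(\omega)} Zf_j(\omega)\bigr\|_\Hel^2\,d\mu(\omega),
\]
so the minimisation decouples across fibers.

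Second, for almost every $\omega$ the inner problem is a finite-dimensional best $n$-subspace approximation of the $m$ vectors $Zf_1(\omega),\dots,Zf_m(\omega)\in\Hel$. By the classical Eckart-Young theorem, the optimum is the span of the top $n$ left singular vectors of the synthesis map $\C^m\to\Hel$ sending $e_j$ to $Zf_j(\omega)$, equivalently the top $n$ eigenvectors of the Hermitian Gramian
\[
G(\omega) \;=\; \bigl[\langle Zf_i(\omega), Zf_j(\omega)\rangle_\Hel\bigr]_{i,j=1}^m,
\]
and the minimal pointwise error equals the sum of the smallest $m-n$ eigenvalues of $G(\omega)$. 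Integrating this over $\Omega$ gives the closed-form global error and already forces any admissible $V$ to satisfy $\mathcal E(\mathcal F;V)$ bounded below by this integral.

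The main obstacle is the measurability of the pointwise optimal choice: individual eigenvectors need not depend measurably on $\omega$ when eigenvalues of $G(\omega)$ cross or when the rank drops. The clean workaround is to bypass eigenvectors and work with spectral projections. Since $\omega\mapsto G(\omega)$ is a measurable map into Hermitian $m\times m$ matrices, the spectral projector onto the span of the top $n$ eigenspaces can be expressed locally as a Riesz contour integral $(2\pi i)^{-1}\oint (zI-G(\omega))^{-1}\,dz$ around the relevant part of the spectrum, which is measurable on each set where a fixed contour separates the chosen eigenvalues from the rest. Decomposing $\Omega$ into countably many such measurable pieces yields a measurable rank-$\leq n$ projection-valued map, hence a measurable range function $J$. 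Pulling $J$ back through $Z^{-1}$ and selecting (say, by a Gram-Schmidt procedure applied to a measurable frame) generators produces the desired $\psi_1,\dots,\psi_n\in L^2(G)$; the fiberwise optimality together with the decoupled error formula then delivers $\mathcal E(\mathcal F;S_\Gamma(\{\psi_1,\dots,\psi_n\}))\leq \mathcal E(\mathcal F;V)$ for every competing $\Gamma$-invariant $V$ of length at most $n$.
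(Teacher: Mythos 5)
Your overall route is the same as the paper's: fiberize $L^2(G)$ by a Zak-transform-based unitary (the paper's $H_\Gamma$, an isometric isomorphism onto $L^2(T_{L^\perp}\times T_{\B^\perp},\ell^2(\B^\perp))$ that intertwines $\Pi(\ell,\beta)$ with characters), identify $\Gamma$-invariant subspaces with measurable range functions so that projections act fiberwise, decouple the error as an integral of fiber errors, apply Eckart--Young to the Gramian of $\{H_\Gamma f_j(\omega,t)\}$ in each fiber, and pull the optimal fiber subspaces back. All of that matches Theorem \ref{Th2-1}, Propositions \ref{Pro1} and \ref{Pro2}, and Section \ref{Sec3}.

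The one place you diverge is the measurability of the pointwise optimizer, and that is where your argument has a gap. The Riesz contour-integral projector $(2\pi i)^{-1}\oint(zI-G(\omega))^{-1}\,dz$ produces a measurable projection only on sets where a fixed contour separates the eigenvalues you want from the ones you do not; but on the (possibly positive-measure) set where $\lambda_n(\omega)=\lambda_{n+1}(\omega)>0$ there is no such separation: the spectral projector onto eigenvalues $\geq\lambda_n(\omega)$ has rank strictly greater than $n$, the projector onto eigenvalues $>\lambda_n(\omega)$ has rank strictly less than $n$ (and is then strictly suboptimal for Eckart--Young), and what you actually need is a measurable choice of an $n$-dimensional subspace sitting partly inside a measurably varying eigenspace --- which is exactly the selection problem you were trying to bypass. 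The paper sidesteps this entirely by citing Lemma 2.3.5 of Ron and Shen \cite{RS95}, which supplies globally measurable eigenvalues $\lambda_i(\omega,t)$ and measurable orthonormal left eigenvector fields $y_i(\omega,t)$ of the Gramian with no genericity assumption; the generators are then given by the explicit formula \eqref{Eq3-2}, $q_i=\widetilde\sigma_i\sum_j y_{i,j}\,H_\Gamma f_j$, which simultaneously yields the Parseval frame property and the pointwise bound $\|q_i(\omega,t)\|\leq 1$ that (together with the finite measure of $T_{L^\perp}\times T_{\B^\perp}$) guarantees $q_i$ lies in $L^2$ and hence can be pulled back through $H_\Gamma$. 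Your sketch also leaves the square-integrability of the selected generators unaddressed; it would follow from pointwise normalization plus finiteness of the measure, but it should be said. To close your argument you should either refine the partition of the base by the multiplicity pattern of the spectrum and make a measurable selection inside the tied eigenspace, or simply invoke the Ron--Shen measurable diagonalization as the paper does.
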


\begin{remark}

Observe that, in the previous statement, some of the generators $\{ \psi_1, ... , \psi_n  \}$ may be zero. In this case, the length of $S_\Gamma\{ \psi_1, ... , \psi_n \}$ would be strictly smaller than $n$.

\end{remark}

The proof of Theorem \ref{Thm:main} will follow the ideas originally developed in \cite{ACHM2007} for approximating data in $L^2(\R^d)$ by shift-invariant subspaces of finite length, and which have also been used in \cite{CMP2017, BCHM2019}.

We reduce the problem of finding the collection $\{ \psi_1, ... , \psi_n  \}$, whose existence is asserted in Theorem \ref{Thm:main}, to solve infinitely many approximation problems for data in a particular Hilbert space of sequences. This is accomplished with the help of an isometric isomorphism $H_\Gamma$ that intertwines the unitary representation $\Pi$ with the characters of $\Gamma$. This isometry $H_\Gamma$ generalizes the fiberization map of \cite{BDR1994} used in \cite{ACHM2007}, and has the properties of a Helson map as defined in \cite{BHP2018}(Definition 7). The definition and properties of $H_\Gamma$ are given in Section \ref{Sec2}.

The reduced problems are then solved by using Eckart-Young theorem as stated and proved in \cite{ACHM2007} (Theorem 4.1). The solutions of all of these reduced problems are patched together to finally obtain the proof of Theorem \ref{Thm:main} in Section \ref{Sec3}.

\section{An isometric isomorphism} \label{Sec2}

Let $G$ be a second countable LCA group, $L$ a uniform lattice in $G$, and $\B \subset L^\perp$ a uniform lattice in $\widehat G$ (see definitions in Section \ref{Sec1}). With $\Gamma = L\times \B$, each $\Gamma$-invariant subspace $V$ of $L^2(G)$ is of the form
$$
V=S_\Gamma(\mathcal A) := \overline{\mbox{span}\{ T_\ell M_\beta \varphi: \varphi \in \mathcal A, (\ell,\beta)\in \Gamma \}}^{L^2(G)}
$$
for some countable set $\mathcal A \subset L^2(G)$. Therefore 
$$
V=S_L(\{ M_\beta \varphi: \varphi  \in \mathcal A, \beta \in \B \}) 
$$
is also an $L$-invariant subspace, that is $T_\ell f \in V$ for all $\ell \in L$ whenever $f\in V$. The theory of shift-invariant spaces on LCA groups, as developed in \cite{CP2010}, can be applied to this situation.

Let $T_{L^\perp} \subset \widehat{G}$ be a measurable cross-section of $\widehat{G}/L^\perp$. The set $T_{L^\perp}$ is in one to one correspondence with the elements of $\widehat{G}/L^\perp$, and $\{ T_{L^\perp} + \lambda : \lambda \in L^\perp  \}$ is a tiling of $\widehat G$. 

Let $\widehat f (\omega) := \int_G f(x) \overline{(x, w)} dx$ denote the unitary Fourier transform of $f\in L^2(G)\cap L^1(G)$ and extended to $L^2(G)$ by density. By Proposition 3.3 in \cite{CP2010} the mapping $\T : L^2(G) \to L^2(T_{L^\perp},\ell^2(L^\perp))$ given by
\begin{equation} \label{Eq2-1}
\T f(\omega) = \{\wh{f}(\omega + \lambda)\}_{\lambda \in L^\perp} , \ f \in L^2(G),
\end{equation}
is an isometric isomorphism. Moreover, since $V\subset L^2(G)$ is an $L$-invariant space, it has an associated measurable range function 
$$
J : T_{L^\perp} \longrightarrow \{ \mbox{closed subspaces of} \ \ell^2(L^\perp) \}
$$
such that (See Theorem 3.10 in \cite{CP2010})
\begin{equation} \label{Eq2-2}
J(\omega) = \overline{ \mbox{span} \, \{ \T(M_\beta \varphi)(\omega) : \beta \in \B, \, \varphi \in \mathcal A \}  }^{\ell^2(L^\perp)} \,, \ \mbox{a.e}\ \omega \in T_{L^\perp}.
\end{equation}

Using the definition of $\T$ given in (\ref{Eq2-1}), for each $\beta \in \B$ and each $\varphi \in L^2(G)$ we have
\begin{equation} \label{Eq2-3}
   \T(M_\beta \varphi)(\omega) = \{ \widehat {M_\beta \varphi}\,(\omega + \lambda) \}_{\lambda \in L^\perp} = \{ \widehat \varphi \,(\omega + \lambda - \beta) \}_{\lambda \in L^\perp} = t_\beta(\T \varphi (\omega))
\end{equation}
where $t_\beta : \ell^2(L^\perp) \longrightarrow \ell^2(L^\perp)$ is the translation of sequences in $\ell^2(L^\perp)$ by elements of $\beta \in \B,$ that is $t_\beta (\{ a(\lambda) \}_{\lambda \in L^\perp}) = \{ a(\lambda - \beta) \}_{\lambda \in L^\perp}.$ Therefore, $\T$ intertwines the modulations $\{M_\beta\}_{\beta \in \B}$ with the translations by $\B$ on $\ell^2(L^\perp).$

By equations \eqref{Eq2-2} and \eqref{Eq2-3}, for a. e. $\omega\in T_{L^\perp}$,
$$
J(\omega) = \overline{ \mbox{span} \, \{ t_\beta (\T \varphi(\omega)) : \beta \in \B, \, \varphi \in \mathcal A \}  }^{\ell^2(L^\perp)}.
$$
Therefore, $J(\omega)$ is a $\B$-invariant subspace of $L^2(L^\perp)$. We can apply the theory of shift-invariant spaces as developed in \cite{CP2010} to the discrete LCA group $L^\perp$ and its uniform lattice $\B$.

Let $\B^\perp$ be the annihilator of $\B$ in the compact group $\widehat {L^\perp} \subset G$, that is
\begin{equation}  \label{Eq2-4}
\B^\perp = \{ b \in \widehat{L^\perp} : (b, \beta) = 1 \ \forall \beta\in \B   \}.
\end{equation}
Observe that $\B^\perp$ is finite, because it is a discrete subgroup of a compact group.

Let $T_{ \B^\perp} \subset \widehat{L^\perp}$ be a measurable cross-section of $ \widehat{L^\perp}/\B^\perp$.
The set $T_{ \B^\perp}$ is in one to one correspondence with the elements of $ \widehat{L^\perp}/ \B^\perp$ and $\{ T_{\B^\perp} + b : b \in \B^\perp  \}$ is a tiling of $\widehat{L^\perp}$. 

\begin{example}.
\emph{
Let $G = \R, L = \Z$ and $\B = n\Z \subset L^\perp = \Z \subset \widehat \R.$ Since $\widehat {L^\perp} = \widehat \Z \approx [0,1)$,\ \  $\ell \in \B^\perp$ if and only if $\ell \in [0,1)$ and $e^{2\pi i \ell \cdot nk}= 1$ for all $k\in \Z.$ Hence 
$$
\B^\perp = \{ 0, \frac1n, \dots, \frac{n-1}{n} \}.
$$
We can take $\displaystyle T_{\B^\perp} = [0, \frac1n)$. Notice that as a subgroup of $\widehat \R$ the annihilator of $\B$ is $\displaystyle \frac1n \Z.$
}
\end{example}

\begin{example}\label{ex:finite}
\emph{
Let $p, q \in \N$, $d = pq$, and $G = \Z_d = \{0, 1, \dots, d-1\}$. Let $L = \{0, p, 2p, \dots p(q-1)\} = \{n p : n=0,\dots,q-1\}\approx \Z_q$. Its annihilator lattice is
\begin{align*}
L^\perp & = \Big\{\lambda \in \{0, 1, \dots, d-1\} : e^{2\pi i \frac{\lambda n p}{d}} = 1 \ \forall \ n=0,\dots,q-1\Big\}\\
& = \{0, q, 2q, \dots q(p-1)\} = \{kq : k=0,\dots,p-1\}\approx \Z_p.
\end{align*}
A fundamental set $T_{L^\perp}$ for $L^\perp$ in $\wh{G} \approx \Z_d$  is
$
T_{L^\perp} = \{0,\dots,q-1\} \approx \Z_q.
$
The characters $\omega \in \wh{L^\perp} = \{$homomorphisms $: L^\perp \to \mathbb{T}\}$ of this group are of the form (see e.g. \cite{Deit2005} Lemma 5.1.3)
$
\omega_\nu(\lambda) = e^{2\pi i \frac{\lambda \nu}{p}}, \ \lambda \in L^\perp
$
for $\nu \in \{\frac{\ell}{q} : \ell = 0,\dots,p-1\} \approx \Z_p$.
}
{\emph
Suppose now that $p = rs$ for some $r,s \in \N$, and let $\B \subset L^\perp$ be
$$
\B = \{0, rq, 2rq, \dots, (s-1)rq\} = \{j r q : j=0,\dots,s-1\} \approx \Z_s.
$$
The annihilator of $\B$ in $\wh{L^\perp}$ thus reads
\begin{align*}
\B^\perp & = \Big\{b \in \{\frac{\ell}{q} : \ell = 0,\dots,p-1\} : e^{2\pi i \frac{b j r q}{p}} = 1 \ \forall j=0,\dots,s-1\Big\}\\
& = \{0,\frac{s}{q},\frac{2s}{q},\dots,\frac{s(r-1)}{q}\} = \{h \frac{s}{q} : h=0,\dots,r-1\}\approx \Z_r.
\end{align*}
A fundamental set in $\wh{L^\perp} = \{\frac{\ell}{q} : \ell = 0,\dots,p-1\}$ for $\B^\perp$ is \newline
$
T_{\B^\perp} = \Big\{0, \frac{1}{q}, \dots,\frac{s-1}{q}\Big\} \approx \Z_s.
$
}
\end{example}

By Proposition 3.3 in \cite{CP2010}, the mapping $\K : \ell^2(L^\perp) \to L^2(T_{ \B^\perp},\ell^2( \B^\perp))$ given by
\begin{eqnarray} 
\K (\{a(\lambda)\}_{\lambda\in L^\perp})(t) &=&
\{  (\{ a(\lambda)\}_{\lambda\in L^\perp})^\wedge\  (t+b) \}_{b \in \B^\perp} \nonumber \\
&=& \left\{\sum_{\lambda \in L^\perp} a(\lambda) \overline{(t+b, \lambda)}\right\}_{b \in  \B^\perp},  \label{Eq2-5}
\end{eqnarray}
is an isometric isomorphism. Moreover, each $\B$-invariant subspace $J(\omega), \ \omega \in T_{L^\perp},$ has an associated measurable range function
$$
J(\omega, \cdot): T_{ \B^\perp} \longrightarrow \{ \mbox{closed subspaces of} \ \ell^2(\B^\perp) \},
$$
such that for almost every $t\in T_{\B^\perp}$,
${\displaystyle
J(\omega, t) = \overline{ \mbox{span} \, \{ \K (\T \varphi)(\omega)) (t) : \varphi \in \mathcal A \}  }^{\ell^2(\B^\perp)} \,. }
$
From the definition of $\T$ given in \eqref{Eq2-1} and the definition of $\K$ given in \eqref{Eq2-5} we obtain
\begin{equation} \label{Eq2-6}
\K (\T \varphi)(\omega))(t) = \left\{\sum_{\lambda \in L^\perp} \widehat f(\omega + \lambda) \overline{(t+b, \lambda)} \right\}_{b \in  \B^\perp},
\end{equation}
when $f\in L^2(G), \, \omega \in T_{L^\perp},$ and $ t\in T_{\B^\perp}.$

For $f\in L^2(G), \, \omega \in \widehat G,$ and $ t\in G$ define
\begin{equation} \label{Eq2-7}
   \mathcal Z f (\omega, t) := \sum_{\lambda \in L^\perp} \widehat f(\omega + \lambda) \overline{(t, \lambda)}\,,
\end{equation}
the Zak transform of $\widehat f$ with respect to the lattice $L^\perp.$ Observe that in terms of this map, $\K (\T \varphi)(\omega))(t) = \{ \mathcal Z f (\omega, t+b) \}_{b \in \B^\perp}.$

To simplify the statement of the next theorem we write $X_\beta$ for the character on $G$ associated to $\beta\in \B$, that is $X_\beta : G \longrightarrow \mathbb T $ with $X_\beta(x)= (x,\beta)$ for all $x\in G$. Similarly $X_\ell$ will denote the character on $\widehat G$ associated to $\ell \in L$, that is $X_\ell : \widehat G \longrightarrow \mathbb T $ with $X_\ell(\omega)= (\ell,\omega)$ for all $\omega\in \widehat G$.

\begin{theorem}\label{Th2-1}
Let $G$ be a second countable  LCA group, $L$ and $\B$ be uniform lattices in $G$ and $\widehat G$ repectively, with $\B\subset L^\perp$. Let $\Gamma = L \times \B$ and for 
$f\in L^2(G), \,\omega \in T_{L^\perp},$ and $ t\in T_{\B^\perp}$ define
\begin{equation} \label{EqH}
H_\Gamma f (\omega, t) = \{ \mathcal Z f (\omega, t+b) \}_{b \in \B^\perp}.
\end{equation}
Then

1) The map $H_\Gamma$ intertwines $\Pi$ with the characters of $\Gamma$, that is
$H_\Gamma \Pi(\ell, \beta) f = X_{-\ell} X_{-\beta} H_\Gamma f$ for all  $f\in L^2(G), \ell\in L, \beta\in \B.$

2) The map $H_\Gamma$ defined in \eqref{EqH} is an isometric isomorphism from $L^2(G)$ onto $L^2(T_{L^\perp}\times T_{\B^\perp}, \ell^2(\B^\perp)).$
\end{theorem}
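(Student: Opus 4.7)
The key observation for part (2) is that, comparing equation \eqref{Eq2-6} with the definition \eqref{EqH} and \eqref{Eq2-7}, we have the factorization
$$
H_\Gamma f(\omega,t) \;=\; \K\bigl(\T f(\omega)\bigr)(t),
$$
i.e.\ $H_\Gamma$ is obtained by first applying $\T$ and then applying $\K$ fiberwise in $\omega$. Since $\T:L^2(G)\to L^2(T_{L^\perp},\ell^2(L^\perp))$ is an isometric isomorphism and $\K:\ell^2(L^\perp)\to L^2(T_{\B^\perp},\ell^2(\B^\perp))$ is an isometric isomorphism, the fiberwise extension $\widetilde{\K}:L^2(T_{L^\perp},\ell^2(L^\perp))\to L^2(T_{L^\perp},L^2(T_{\B^\perp},\ell^2(\B^\perp)))$ is an isometric isomorphism as well. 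Invoking Fubini's theorem to identify the latter space with $L^2(T_{L^\perp}\times T_{\B^\perp},\ell^2(\B^\perp))$, the composition $H_\Gamma=\widetilde{\K}\circ\T$ yields the desired isometric isomorphism.

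For part (1) the plan is a direct computation at the level of the Zak transform. First I would record the elementary identity
$$
\widehat{T_\ell M_\beta f}(\omega)\;=\;\overline{(\ell,\omega)}\,\widehat f(\omega-\beta),
$$
obtained from the standard fact that translations go to modulations under $\,\widehat{\phantom{f}}\,$, combined with $\widehat{M_\beta f}=\widehat f(\cdot-\beta)$. Plugging this into \eqref{Eq2-7} gives
$$
\mathcal Z\bigl(\Pi(\ell,\beta)f\bigr)(\omega,t)
\;=\;\overline{(\ell,\omega)}\sum_{\lambda\in L^\perp}\overline{(\ell,\lambda)}\,\widehat f(\omega+\lambda-\beta)\,\overline{(t,\lambda)}.
$$
The factor $(\ell,\lambda)$ equals $1$ because $\ell\in L$ and $\lambda\in L^\perp$, so it disappears. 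Since $\beta\in\B\subset L^\perp$, the change of summation variable $\mu=\lambda-\beta$ keeps the index set $L^\perp$, and pulls out the factor $\overline{(t,\beta)}$, yielding
$$
\mathcal Z\bigl(\Pi(\ell,\beta)f\bigr)(\omega,t)\;=\;\overline{(\ell,\omega)}\,\overline{(t,\beta)}\,\mathcal Z f(\omega,t)
\;=\;X_{-\ell}(\omega)\,X_{-\beta}(t)\,\mathcal Z f(\omega,t).
$$

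It then remains to show that the shift $t\mapsto t+b$ with $b\in\B^\perp$ does not affect the factor $X_{-\beta}(t)$ when passing to $H_\Gamma$. This is precisely the role of the definition \eqref{Eq2-4}: for $b\in\B^\perp$ one has $(b,\beta)=1$, hence $X_{-\beta}(t+b)=\overline{(t+b,\beta)}=\overline{(t,\beta)}=X_{-\beta}(t)$. Therefore
$$
H_\Gamma\Pi(\ell,\beta)f(\omega,t)
\;=\;\{\mathcal Z(\Pi(\ell,\beta)f)(\omega,t+b)\}_{b\in\B^\perp}
\;=\;X_{-\ell}(\omega)\,X_{-\beta}(t)\,H_\Gamma f(\omega,t).
$$

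I expect the main obstacle to be essentially bookkeeping: carefully tracking the pairings $(\cdot,\cdot)$ and the additive/multiplicative conventions across the three groups $G$, $L^\perp$ and $\widehat{L^\perp}$, and making sure the change of variable $\mu=\lambda-\beta$ inside $L^\perp$ is used together with $\B\subset L^\perp$ to keep the sum over the full lattice. The two properties that do all the real work are $(\ell,\lambda)=1$ on $L\times L^\perp$ and $(b,\beta)=1$ on $\B^\perp\times\B$, so after setting up notation the proof should be short.
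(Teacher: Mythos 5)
Your proposal is correct and follows essentially the same route as the paper: part (2) via the factorization $H_\Gamma f(\omega,t)=\K(\T f(\omega))(t)$ together with the fiberwise-extension lemma and the Fubini identification, and part (1) via the same direct Zak-transform computation using $(\ell,\lambda)=1$, the change of variable $\lambda'=\lambda-\beta$ in $L^\perp$, and $(b,\beta)=1$. The only cosmetic difference is that you compute $\mathcal Z(\Pi(\ell,\beta)f)(\omega,t)$ at a general $t$ and then specialize to $t+b$, whereas the paper works at $t+b$ throughout.
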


\begin{proof}
For each $b \in \B^\perp$, the definition of $\mathcal Z$ given in \eqref{Eq2-7} and the properties of the Fourier transform give
\begin{align*}
\mathcal Z \Pi(\ell, \beta) f (\omega,t+b) &  = \sum_{\lambda \in \Lambda^\perp} \wh{T_\ell M_\beta f}(\omega + \lambda) \overline{(t+b, \lambda)}\\
& = \sum_{\lambda \in \Lambda^\perp} \overline{ (\ell, \omega + \lambda)} \wh{ f}(\omega + \lambda - \beta) \overline{(t+b, \lambda)}\,.
\end{align*}
Using that $(\ell, \lambda)=1$ and the change of variables $\lambda - \beta = \lambda' \in L^\perp$ yields
$$
\mathcal Z \Pi(\ell, \beta) f (\omega,t+b) = 
\overline{ (\ell, \omega)} \sum_{\lambda' \in \Lambda^\perp} \wh{ f}(\omega + \lambda') \overline{(t+b, \lambda' + \beta)}\,.
$$
Using that $(t+b, \beta) = (t, \beta)\cdot (b,\beta) = (t,\beta)$ we obtain
\begin{align*}
\mathcal Z \Pi(\ell, \beta) f (\omega,t+b) &  = 
\overline{ (\ell, \omega)}\ \overline{(t,\beta)} \sum_{\lambda' \in \Lambda^\perp} \wh{ f}(\omega + \lambda') \overline{(t+b, \lambda')}\\
& = X_{-\ell}(\omega) X_{-\beta}( t) \mathcal Z f (\omega, t + b)\,.
\end{align*}
This proves $1)$. To prove $2)$ observe that by the definition of $H_\Gamma$ given in \eqref{EqH} together with \eqref{Eq2-6} and \eqref{Eq2-7} we have
$$
H_\Gamma f (\omega, t) = \K (\T  f(\omega))(t)\,.
$$

That $H_\Gamma$ is an isometry now follows from the fact that $\T$ and $\K$ are isometries in their respective spaces.

We need to prove that $H_\Gamma$ is onto. Since $\K : \ell^2(L^\perp) \to L^2(T_{ \B^\perp},\ell^2( \B^\perp))$ is an isometric isomorphism between Hilbert spaces, by Lemma \ref{Lemma} in the Appendix, the map 
$$
Q_\K : L^2(T_{L^\perp}, \ell^2(L^\perp)) \longrightarrow L^2(T_{L^\perp}, L^2(T_{\mathcal B^\perp}, \ell^2(\mathcal B^\perp))
$$
given by 
$$
(Q_\K f)(\omega) = \K(f(\omega)) , \ f \in L^2(T_{L^\perp}, \ell^2(L^\perp))
$$ 
is an isometric isomorphism. Moreover, by Fubini's theorem, the Hilbert spaces $L^2(T_{L^\perp}, L^2(T_{\mathcal B^\perp}, \ell^2(\mathcal B^\perp))$ and $L^2(T_{L^\perp}\times T_{\mathcal B^\perp}, L^2(\ell^2(\mathcal B^\perp))$ are also isomorphic and the isomorphism is given by $\Phi(f)(\omega, t) = f(\omega)(t)$, for $f \in L^2(T_{L^\perp}, L^2(T_{\mathcal B^\perp}, \ell^2(\mathcal B^\perp))$.

Let now $F\in L^2(T_{L^\perp}\times T_{\mathcal B^\perp}, L^2(\ell^2(\mathcal B^\perp))$. Choose $g\in L^2(T_{L^\perp}, \ell^2(L^\perp))$ such that $\Phi\circ Q_\K (g) = F.$ Hence
$$
F(\omega,t) = \Phi\circ Q_\K (g)(\omega, t) = Q_\K(g)(\omega)(t) = \K (g(\omega))(t).
$$
Choose now $f\in L^2(G)$ such that $\T(f) = g.$ Then 
$$
H_\Gamma f (\omega, t) = \K(\T f (\omega))(t) = F(\omega, t).
$$
This finishes the proof of the theorem.
\end{proof}

\begin{example}
\emph{For the cyclic group of Example \ref{ex:finite}, recall that, for} $f \in \C^d$
$$
\wh{f}(\omega) = \frac{1}{\sqrt{d}}\sum_{g=0}^{d-1} f(g) e^{-2\pi i \frac{g\omega}{d}} , \ \omega \in \{0,\dots,d-1\}.
$$
\emph{For} $t \in T_{\B^\perp} = \Big\{0, \frac{1}{q}, \dots,\frac{s-1}{q}\Big\}$\emph{, the Zak transform (\ref{Eq2-7}) thus reads}
\begin{align*}
\mathcal Z & f (\omega,t) = \sum_{k = 0}^{p-1} \wh{f}(\omega + kq) e^{-2 \pi i \frac{kqt}{p}}
= \sum_{k = 0}^{p-1} \frac{1}{\sqrt{d}}\sum_{g=0}^{d-1} f(g) e^{-2\pi i \frac{g(\omega + kq)}{d}} e^{-2 \pi i \frac{kqt}{p}}\\
& = \frac{1}{\sqrt{d}}\sum_{g=0}^{d-1} f(g) e^{-2\pi i \frac{g\omega}{d}} K(g + qt) = \frac{e^{2\pi i \frac{q t \omega}{d}}}{\sqrt{d}}\sum_{g=0}^{d-1} f(g - qt) e^{-2\pi i \frac{g\omega}{d}} K(g)
\end{align*}
\emph{where} $\displaystyle K(g) = \sum_{k = 0}^{p-1} \left(e^{-2 \pi i \frac{g}{p}}\right)^k = \left\{
\begin{array}{lll}
p &\text{if} & \ g \in L\\
0 &\text{if} & \ g \notin L
\end{array}
\right.
$\emph{. This gives}
$$
\mathcal Z f (\omega,t) = \sqrt{p} e^{2\pi i \frac{q t \omega}{d}}\,\frac{1}{\sqrt{q}}\sum_{n=0}^{q-1} f(pn - qt) e^{-2\pi i \frac{pn\omega}{q}}.
$$
\end{example}

Before embarking in the proof of Theorem \ref{Thm:main}, which will be accomplished in Section \ref{Sec3}, we need an additional result.

Let $V=S_\Gamma(\mathcal A)$ be a $\Gamma$-invariant subspace of $L^2(G)$, where $\mathcal A \subset L^2(G).$ For each $(\omega, t)\in T_{L^\perp}\times T_{\B^\perp},$ consider the range function 
$$
J_V : \T_{L^\perp}\times T_{\B^\perp} \longrightarrow \{\mbox{closed subspaces of} \ \ell^2(\B^\perp) \}
$$ 
given by
\begin{equation} \label{EqJ}
J_V(\omega, t) : = \overline{ \mbox{span} \, \{ H_\Gamma \varphi (\omega,t) : \varphi \in \mathcal A \}  }^{\ell^2(\B^\perp)} \,.
\end{equation}

\begin{proposition} \label{Pro1}
With $V=S_\Gamma(\mathcal A)$ as above, let $\Proj_{J_V(\omega,t)}$ be the orthogonal projection of $\ell^2(\B^\perp)$ onto $J_V(\omega,t)$. Then, for all $f \in L^2(G)$ and $(\omega, t)\in T_{L^\perp}\times T_{\B^\perp},$
\begin{equation*}
\Hel_\vG \proj_{S_\Gamma(\mathcal A)} f (\omega,t) = \Proj_{J_V(\omega,t)} (H_\Gamma f(\omega,t)) \,.
\end{equation*}
\end{proposition}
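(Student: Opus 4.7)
The plan is to exploit the composite structure of $H_\Gamma$ together with the range function theory of shift–invariant spaces applied twice: once to the $L$–invariant structure of $V$ via $\T$, and then fiberwise to the $\B$–invariant structure of $J(\omega) \subset \ell^2(L^\perp)$ via $\K$. The identity $H_\Gamma f(\omega,t) = \K(\T f(\omega))(t)$, established in the proof of Theorem \ref{Th2-1}, is the bridge that lets us combine both layers.

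First, since $V = S_\Gamma(\mathcal A)$ is in particular $L$–invariant and generated by $\{M_\beta\varphi : \varphi\in\mathcal A,\ \beta\in\B\}$, Proposition~3.3 and Theorem~3.10 in \cite{CP2010} give that $\T(V)$ consists exactly of the measurable sections of the range function $J(\omega)$ in \eqref{Eq2-2}, and moreover the orthogonal projection onto $V$ is transported by $\T$ to the fiberwise projection, i.e.
\begin{equation*}
\T\bigl(\proj_V f\bigr)(\omega) = \Proj_{J(\omega)}\bigl(\T f(\omega)\bigr), \quad \text{a.e. } \omega \in T_{L^\perp}.
\end{equation*}
Next, I would observe that for a.e.\ fixed $\omega$ the subspace $J(\omega) \subset \ell^2(L^\perp)$ is $\B$–invariant (this was already noted in Section~\ref{Sec2}), so the same theory of \cite{CP2010}, now applied to the discrete LCA group $L^\perp$ with lattice $\B$ and the isometric isomorphism $\K$, yields a range function $t \mapsto J(\omega,t)$ characterizing $\K(J(\omega))$ and satisfying
\begin{equation*}
\K\bigl(\Proj_{J(\omega)} a\bigr)(t) = \Proj_{J(\omega,t)}\bigl(\K a(t)\bigr), \quad \text{a.e. } t \in T_{\B^\perp},
\end{equation*}
for every $a \in \ell^2(L^\perp)$. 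By construction and \eqref{Eq2-6}, this fiberwise range function is exactly
\begin{equation*}
J(\omega,t) = \overline{\mathrm{span}\{\K(\T\varphi(\omega))(t) : \varphi \in \mathcal A\}}^{\ell^2(\B^\perp)} = J_V(\omega,t),
\end{equation*}
matching \eqref{EqJ}.

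Combining both steps with $H_\Gamma f(\omega,t) = \K(\T f(\omega))(t)$, I would compute
\begin{equation*}
H_\Gamma \proj_V f(\omega,t) = \K\bigl(\T \proj_V f(\omega)\bigr)(t) = \K\bigl(\Proj_{J(\omega)}\T f(\omega)\bigr)(t) = \Proj_{J(\omega,t)}\bigl(\K(\T f(\omega))(t)\bigr),
\end{equation*}
which equals $\Proj_{J_V(\omega,t)}(H_\Gamma f(\omega,t))$, as required.

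The main obstacle I anticipate is the second step: rigorously invoking the projection-fiberization formula for $\K$, because it has to hold uniformly in the parameter $\omega$ so that the composition with $\T$ produces a genuinely measurable function of $(\omega,t)$. Two technical points must be checked: that $\omega \mapsto J(\omega)$ is a measurable range function (which is the content of Theorem~3.10 in \cite{CP2010} for the $L$–invariant space $V$), and that the joint map $(\omega,t) \mapsto \Proj_{J_V(\omega,t)}$ is measurable. Both follow from the general machinery in \cite{CP2010} together with Fubini's theorem, after passing through the isomorphism $L^2(T_{L^\perp}, L^2(T_{\B^\perp}, \ell^2(\B^\perp))) \cong L^2(T_{L^\perp}\times T_{\B^\perp}, \ell^2(\B^\perp))$ already invoked at the end of the proof of Theorem \ref{Th2-1}. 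Once these measurability issues are settled, the chain of equalities above completes the proof.
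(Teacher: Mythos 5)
Your argument is correct in outline, but it takes a genuinely different route from the paper. You iterate the classical shift--invariant--space fiberization theory of \cite{CP2010} twice: first you transport $\proj_V$ through $\T$ to the fiberwise projections $\Proj_{J(\omega)}$ on $\ell^2(L^\perp)$, and then, for each fixed $\omega$, you transport $\Proj_{J(\omega)}$ through $\K$ to the projections $\Proj_{J(\omega,t)}$ on $\ell^2(\B^\perp)$, gluing the two steps with the identity $H_\Gamma f(\omega,t)=\K(\T f(\omega))(t)$. The paper instead makes a single application of the Bownik--Ross theory of multiplicatively invariant spaces \cite{BR2015} on the product $T_{L^\perp}\times T_{\B^\perp}$: it checks that the characters $\{X_\ell X_\beta\}$ form a determining set, that $H_\Gamma(S_\Gamma(\mathcal A))$ is $\mathcal D$-multiplicatively invariant (via part 1) of Theorem \ref{Th2-1}), and then reads off both the measurability of $J_V$ (Theorem 2.4 of \cite{BR2015}) and the fiberwise projection formula (Proposition 2.2 of \cite{BR2015}) in one stroke. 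The one point where your route needs more care than you allow is exactly the one you flag: the second-level range function $t\mapsto J(\omega,t)$ is produced by \cite{CP2010} separately for each $\omega$, so neither its joint measurability in $(\omega,t)$ nor the passage from ``for a.e.\ $\omega$, for a.e.\ $t$'' to ``for a.e.\ $(\omega,t)$'' is automatic from \cite{CP2010} plus Fubini alone; you should justify joint measurability directly, e.g.\ by noting that $J_V$ in \eqref{EqJ} is generated by the countably many jointly measurable vector functions $H_\Gamma\varphi$, $\varphi\in\mathcal A$, which makes $(\omega,t)\mapsto\Proj_{J_V(\omega,t)}$ weakly measurable, after which the Fubini argument closes. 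Your approach buys a more elementary, self-contained proof that never leaves the \cite{CP2010} framework; the paper's approach buys a shorter argument in which the delicate product-space measurability is delegated wholesale to \cite{BR2015}.
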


\begin{proof}
Observe first that, since $H_\Gamma$ is an isometric isomorphism between Hilbert spaces, then
\begin{equation} \label{Eq2-10}
H_\Gamma \proj_{S_\Gamma(\mathcal A)} = \proj_{H_\Gamma (S_\Gamma(\mathcal A))}H_\Gamma .
\end{equation}
The set $\mathcal D := \{ X_\ell X_\beta : (\ell, \beta) \in \Gamma \}$ of characters of $\Gamma$ is a determining set for $L^1(T_{L^\perp}\times T_{\B^\perp})$ in the sense of Definition 2.2 in \cite{BR2015}, because
$$
\int_{T_{L^\perp}\times T_{\B^\perp}} f(\omega,t)X_\ell(\omega) X_\beta(t) d\omega dt = 0 \Rightarrow f = 0 \quad \forall \ f \in L^1(T_{L^\perp}\times T_{\B^\perp}).
$$
Indeed, this is Fourier uniqueness theorem since $T_{L^\perp}$ and $T_{\B^\perp}$ are relatively compact.

By $1)$ of Theorem \ref{Th2-1}, for all $f\in L^2(G)$, $H_\Gamma (T_\ell M_\beta f) = X_{-\ell} X_{-\beta} (H_\Gamma f)$. Thus, $H_\Gamma (S_\Gamma(\mathcal A))$ is $\mathcal D$-multiplicative invariant in the sense of Definition 2.3 in \cite{BR2015}. Indeed, if $ X_\ell X_\beta \in \mathcal D$, $F\in H_\Gamma (S_\Gamma(\mathcal A))$ writing $H_\Gamma f = F$ we have
$$
X_\ell X_\beta F = X_{\ell} X_{\beta} (H_\Gamma f) = H_\Gamma (T_{-\ell} M_{-\beta} f) \in H_\Gamma (S_\Gamma(\mathcal A))\,.
$$ 
By Theorem 2.4 in \cite{BR2015}, $J_V$ is a measurable range function. By Proposition 2.2 in \cite{BR2015}, 
$$
\proj_{H_\Gamma (S_\Gamma(\mathcal A))}(H_\Gamma f)(w,t) = \Proj_{J_V(\omega,t)} (H_\Gamma f(\omega,t)) \,.
$$
The result now follows from \eqref{Eq2-10}.
\end{proof}

\section{Solution to the approximation problem} \label{Sec3}

This section is dedicated to the proof of Theorem \ref{Thm:main}. Let $\mathcal F = \{f_1, \dots, f_m\} \subset L^2(G)$ be a collection of functional data. With the notation of Theorem \ref{Thm:main}, for each $n  <  m$ we need to find $\{\psi_1, \dots, \psi_n\} \subset L^2(G)$ such that $\mathcal E(\mathcal F; S_\Gamma\{\psi_1, \dots, \psi_n\})\leq \mathcal E(\mathcal F; V)$ for any $\Gamma$-invariant subspace $V$ of $L^2(G)$ of length less than or equal $n$. The definition of $\mathcal E(\mathcal F; V)$ is given in \eqref{Eq1-1} and for convenience of the reader we recall it here.
\begin{equation*} 
  \mathcal E(\mathcal F; V):= \sum_{j =1}^m \| f_j - \proj_V f_j\|_{L^2(G)}^2\ .
\end{equation*}

For a.e. $(\omega, t)\in T_{L^\perp}\times T_{\B^\perp}$ consider
$$
H_\Gamma(\mathcal F)(w,t) := \{ H_\Gamma f_1(\omega,t) , \dots, H_\Gamma f_m(\omega,t) \}\,.
$$
Let $G_{\mathcal F, \Gamma}(w,t)$ be the $m\times m$ $\C$-valued matrix whose $(i,j)$ entry is given by
$$
[G_{\mathcal F, \Gamma}(w,t)]_{i,j} = \langle H_\Gamma f_i(\omega,t) , H_\Gamma f_j(\omega,t) \rangle_{\ell^2(\B^\perp)}\,.
$$
The matrix $G_{\mathcal F, \Gamma}(w,t)$ is hermitian and its entries are measurable functions defined on $T_{L^\perp}\times T_{\B^\perp}$. Write
$$
\lambda_1(\omega, t) \geq \lambda_2(\omega, t) \geq \dots, \geq \lambda_m(\omega, t) \geq 0
$$
for the eigenvalues of $G_{\mathcal F, \Gamma}(w,t)$. By Lemma 2.3.5 in \cite{RS95} the eigenvalues $\lambda_i(\omega, t)$, $i=1, \dots, m$, are measurable and there exist corresponding measurable vectors $y_i(\omega, t) = (y_{i,1}(\omega,t), \dots, y_{i,m}(\omega,t))$ that are orthonormal left eigenvectors of the matrix $G_{\mathcal F, \Gamma}(w,t)$. That is,
\begin{equation} \label{Eq3-1}
  y_i(\omega,t)\, G_{\mathcal F, \Gamma}(w,t) = \lambda_i(\omega, t)\, y_i(\omega,t), \quad i=1, \dots, m.
\end{equation}

For $n\leq m$, define $q_1(\omega,t), \dots, q_n(\omega,t) \in \ell^2(\B^\perp)$ by
\begin{equation}  \label{Eq3-2}
    q_i(\omega,t) = \widetilde \sigma_i(\omega, t) \sum_{j=1}^m y_{i,j}(\omega, t)\, H_\Gamma f_j(\omega,t) \quad i=1, \dots, n,
\end{equation}
where 
$$\widetilde \sigma_i(\omega, t) = \begin{cases}
\frac{1}{\sqrt{\lambda_i(\omega,t)}} & \text{ if} \quad  \lambda_i(\omega,t) \neq 0\\
\qquad   0& \quad \text{otherwise.}
\end{cases}$$

By the Eckart-Young Theorem (see the version stated and proved in Theorem 4.1 of \cite{ACHM2007}), $\{q_1(\omega,t), \dots, q_n(\omega,t)\}$ is a Parseval frame for the space it generates $Q(\omega, t) := \mbox{span}\, \{q_1(\omega,t), \dots, q_n(\omega,t)\}$ and $Q(\omega,t)$ is optimal in the sense that
\begin{align}
& E(H_\Gamma(\mathcal F)(w,t);Q(\omega,t)) := \sum_{i=1}^m \|H_\Gamma f_i(\omega,t) - \Proj_{Q(\omega,t)}H_\Gamma(f_i)(w,t)\|^2_{\ell^2(\B^\perp)} \nonumber \\
& \leq \sum_{i=1}^m \|H_\Gamma f_i(\omega,t) - \Proj_{Q'}H_\Gamma(\mathcal F)(w,t)\|^2_{\ell^2(\B^\perp)} := E(H_\Gamma(f_i)(w,t);Q')  \label{Eq3-3}
\end{align}
for any $Q'$ subspace of $\ell^2(\B^\perp)$ of dimension less than or equal to $n$. Moreover,
\begin{equation}\label{eq:pointwiseerror}
E(H_\Gamma(\mathcal F)(w,t);Q(\omega,t)) = \sum_{i = n+1}^m \lambda_i(\omega,t).
\end{equation}

Before continuing with the proof, let us relate the pointwise errors that appear in \eqref{Eq3-3} to the error defined in \eqref{Eq1-1} for $\Gamma$-invariant subspaces. 

\begin{proposition} \label{Pro2}
	For $V = S_\Gamma(\mathcal A)$ as in Proposition \ref{Pro1},
	$$
	\mathcal E(\mathcal F; V) = \int_{T_{L^\perp}} \int_{T_{\B^\perp}} E(H_\Gamma(\mathcal F)(w,t);J_V(\omega,t)) \, dt d\omega\,,
	$$
	where $J_V(\omega, t)$ is defined in \eqref{EqJ}.
\end{proposition}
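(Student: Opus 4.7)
The plan is to chain together the isometry property of $H_\Gamma$, the characterization of the projection given by Proposition \ref{Pro1}, and the definition of the pointwise error $E$. Nothing deep is needed; the proof essentially consists of unraveling definitions in the right order.

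First I would start from the definition \eqref{Eq1-1} of $\mathcal{E}(\mathcal{F};V)$ and, since $H_\Gamma : L^2(G) \to L^2(T_{L^\perp} \times T_{\B^\perp}, \ell^2(\B^\perp))$ is an isometric isomorphism by part 2) of Theorem \ref{Th2-1}, rewrite each summand as
\begin{equation*}
\|f_j - \proj_V f_j\|_{L^2(G)}^2 = \|H_\Gamma f_j - H_\Gamma \proj_V f_j\|_{L^2(T_{L^\perp}\times T_{\B^\perp},\ell^2(\B^\perp))}^2.
\end{equation*}
Here I am using the linearity of $H_\Gamma$ to bring the subtraction inside.

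Next I would apply Proposition \ref{Pro1}, which identifies $H_\Gamma \proj_V f_j(\omega,t) = \Proj_{J_V(\omega,t)}(H_\Gamma f_j(\omega,t))$ for a.e.\ $(\omega,t)$. Expanding the $L^2$ norm as an integral over $T_{L^\perp} \times T_{\B^\perp}$ then yields
\begin{equation*}
\|f_j - \proj_V f_j\|_{L^2(G)}^2 = \int_{T_{L^\perp}} \int_{T_{\B^\perp}} \|H_\Gamma f_j(\omega,t) - \Proj_{J_V(\omega,t)}(H_\Gamma f_j(\omega,t))\|_{\ell^2(\B^\perp)}^2 \, dt \, d\omega.
\end{equation*}

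Finally I would sum from $j=1$ to $m$ and exchange the finite sum with the integral (by linearity), so that the integrand becomes exactly $E(H_\Gamma(\mathcal{F})(\omega,t); J_V(\omega,t))$ according to the definition introduced in \eqref{Eq3-3}. This gives the claimed identity. There is no real obstacle: the only things to check are that Proposition \ref{Pro1} may indeed be applied pointwise a.e.\ (which it is, as stated), and that the identification of $L^2(T_{L^\perp}\times T_{\B^\perp},\ell^2(\B^\perp))$ with the iterated $L^2$ space used in the norm computation is legitimate (which is Fubini, already invoked in the proof of Theorem \ref{Th2-1}).
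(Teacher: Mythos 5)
Your proposal is correct and follows essentially the same route as the paper's own proof: apply the isometry of $H_\Gamma$, expand the $L^2$ norm as an iterated integral, substitute $H_\Gamma \proj_V f_j(\omega,t) = \Proj_{J_V(\omega,t)}(H_\Gamma f_j(\omega,t))$ via Proposition \ref{Pro1}, and exchange the finite sum with the integral to recognize the pointwise error $E$. No gaps.
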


\begin{proof}
By $2)$ of Theorem \ref{Th2-1}, $H_\Gamma$ is an isometry from $L^2(G)$ onto the space $L^2(T_{L^\perp}\times T_{\B^\perp}, \ell^2(\B^\perp)).$ Therefore,
\begin{align*}
\mathcal E (\mathcal F; V) &= \sum_{j =1}^m \| f_j - \proj_V f_j\|_{L^2(G)}^2 \\ &= \sum_{j =1}^m \| H_\Gamma f_j - H_\Gamma \proj_V f_j\|_{L^2(T_{L^\perp}\times T_{\B^\perp}, \ell^2(\B^\perp))}^2  \\
& = \sum_{j =1}^m \int_{T_{L^\perp}} \int_{\B_{L^\perp}} \| H_\Gamma f_j (\omega,t) - H_\Gamma \proj_V f_j (\omega,t)\|_{ \ell^2(\B^\perp)}^2\, \,dt d\omega \,.
\end{align*}
By Proposition \ref{Pro1},
\begin{align*}
\mathcal E (\mathcal F; V) & =  \int_{T_{L^\perp}} \int_{\B_{L^\perp}} \sum_{j =1}^m \| H_\Gamma f_j (\omega,t) - \Proj_{J_V(\omega,t)}(H_\Gamma f_j(\omega, t))\|_{ \ell^2(\B^\perp)}^2 \, dt d\omega \\
& = \int_{T_{L^\perp}} \int_{\B_{L^\perp}}E(H_\Gamma(\mathcal F)(w,t);J_V(\omega,t)) \, dt d\omega\,. & \hfill \Box
\end{align*}
\end{proof}

Let us now continue with the proof of Theorem \ref{Thm:main}. By definition \eqref{Eq3-2}, each $q_i(\omega,t)$ is  measurable and defined on $T_{L^\perp}\times T_{\B^\perp}$ with values in  $\ell^2(\B^\perp)$. Moreover,
\begin{align*}
   &\|q_i(\omega,t)\|^2_{\ell^2(\B^\perp)} = \langle q_i(\omega,t), q_i(\omega,t) \rangle_{\ell^2(\B^\perp)} \\
   & = \widetilde \sigma_i(\omega,t)^2 \sum_{b\in \B^\perp} \sum_{j=1}^m \sum_{s=1}^m y_{i,j}(\omega, t)\, \mathcal Z f_j(\omega,t+b)\, \mathcal Z f_s(\omega,t+b)\, \overline{ y_{i,s}(\omega, t)}\\
   & = \widetilde \sigma_i(\omega,t)^2 \sum_{j=1}^m  y_{i,j}(\omega, t) \sum_{s=1}^m \langle Z f_j(\omega,t), Z f_s(\omega,t) \rangle_{\ell^2(\B^\perp)} \overline{ y_{i,s}(\omega, t)}\,.
\end{align*}
In matrix form,
$$
\|q_i(\omega,t)\|^2_{\ell^2(\B^\perp)} = \widetilde \sigma_i(\omega,t)^2 y_i(\omega,t) \, G_{\mathcal F, \Gamma}(w,t)\, \overline {y_i(\omega,t)}^t\,. 
$$
By \eqref{Eq3-1}, the orthonormality of the vectors $y_i(\omega, t)$, and the definition of $\widetilde \sigma_i(\omega,t)$, we have
$$
\|q_i(\omega,t)\|^2_{\ell^2(\B^\perp)} = \widetilde \sigma_i(\omega,t)^2 \lambda_i (\omega, t) \, \|y_i(\omega,t)\|^2_{\textbf{}} \leq 1\,.
$$
Since $T_{L^\perp}$ and $T_{\B^\perp}$ have finite measure, we conclude that for $i=1, \dots, n,$ $q_i \in L^2(T_{L^\perp}\times T_{\B^\perp}, \ell^2(\B^\perp))$. The mapping $H_\Gamma$ is onto by part $2)$ of Theorem \ref{Th2-1}. Therefore there exist $\psi_i\in L^2(G)$ such that 
$$
H_\Gamma(\psi_i) = q_i\,, \quad  i= 1, \dots, n.
$$

It remains to show that the space $W:= S_\Gamma({\psi_1, \dots, \psi_n})$ is the optimal one as required in the statement of Theorem \ref{Thm:main}. 

By Proposition \ref{Pro2}
$$
\mathcal E(\mathcal F; W) = \int_{T_{L^\perp}} \int_{T_{\B^\perp}} E(H_\Gamma(\mathcal F)(w,t);J_W(\omega,t)) \, dt d\omega\,.
$$
By \eqref{Eq3-3} and the definitions of $\psi_i$, $J_W(\omega,t) = Q(\omega,t)$. Therefore, we can write,
\begin{equation}\label{eq:integralerror}
\mathcal E(\mathcal F; W) = \int_{T_{L^\perp}} \int_{T_{\B^\perp}} E(H_\Gamma(\mathcal F)(w,t);Q(\omega,t)) \, dt d\omega\,.
\end{equation}

Let now $V=S_\Gamma({\varphi_1, \dots, \varphi_r})$, $r\leq n$, be any $\Gamma$-invariant subspace of length less than or equal $n$. Since $J_V(\omega,t)$ has dimension less than or equal $n$, \eqref{Eq3-3} gives
$$
\mathcal E(\mathcal F; W) \leq \int_{T_{L^\perp}} \int_{T_{\B^\perp}} E(H_\Gamma(\mathcal F)(w,t);J_V (\omega,t)) \, dt d\omega = \mathcal E(\mathcal F; V) \,,
$$
where the last equality is due to Proposition \ref{Pro2}. Moreover, by (\ref{eq:integralerror}) and (\ref{eq:pointwiseerror})
$$
\mathcal E(\mathcal F; W) = \sum_{i=n+1}^m \int_{T_{L^\perp}} \int_{T_{\B^\perp}} \lambda_i(\omega,t) d\omega dt.
$$
This finishes the proof of Theorem \ref{Thm:main}. \hfill $\Box$

\section{Appendix} \label{Appendix}

We give the proof of the following Lemma that has been used in Section \ref{Sec2} to prove part $2)$ of Theorem \ref{Th2-1}. 

\begin{lemma} \label{Lemma}
Let $\sigma : \mathbb H_1 \longrightarrow \mathbb H_2$ be an isometric isomorphism between the Hilbert spaces  $\mathbb H_1$ and $\mathbb H_2$. For a measure spaces $(X, d\mu)$ the map $Q_\sigma : L^2(X, \mathbb H_1) \longrightarrow L^2(X, \mathbb H_2)$ given by $(Q_\sigma f)(x) = \sigma(f(x))$ is also an isometric isomorphism.	
\end{lemma}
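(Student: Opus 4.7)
The plan is to verify three things: $Q_\sigma$ is well-defined as a map from $L^2(X,\mathbb H_1)$ into $L^2(X,\mathbb H_2)$, it is linear and norm-preserving, and it is surjective. Linearity is immediate from the linearity of $\sigma$, so the substance lies in the other two points.

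First I would handle measurability. Since $\sigma$ is continuous (being an isometry between Hilbert spaces), for any strongly measurable $f : X \to \mathbb H_1$ the composition $x \mapsto \sigma(f(x))$ is also strongly measurable: this is checked first for simple functions $f = \sum_k \mathbf 1_{E_k} h_k$, where $Q_\sigma f = \sum_k \mathbf 1_{E_k} \sigma(h_k)$ is again simple, and then extended by taking pointwise a.e.\ limits of approximating simple functions. Next, the pointwise identity $\|\sigma(f(x))\|_{\mathbb H_2} = \|f(x)\|_{\mathbb H_1}$ together with integration gives
$$
\|Q_\sigma f\|_{L^2(X,\mathbb H_2)}^2 = \int_X \|\sigma(f(x))\|_{\mathbb H_2}^2 \, d\mu(x) = \int_X \|f(x)\|_{\mathbb H_1}^2 \, d\mu(x) = \|f\|_{L^2(X,\mathbb H_1)}^2,
$$
which simultaneously shows that $Q_\sigma f \in L^2(X,\mathbb H_2)$ and that $Q_\sigma$ is an isometry.

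For surjectivity I would apply the same construction to the inverse $\sigma^{-1} : \mathbb H_2 \to \mathbb H_1$, producing an operator $Q_{\sigma^{-1}} : L^2(X,\mathbb H_2) \to L^2(X,\mathbb H_1)$. Pointwise, for almost every $x \in X$,
$$
(Q_\sigma Q_{\sigma^{-1}} g)(x) = \sigma(\sigma^{-1}(g(x))) = g(x), \qquad (Q_{\sigma^{-1}} Q_\sigma f)(x) = \sigma^{-1}(\sigma(f(x))) = f(x),
$$
so $Q_{\sigma^{-1}}$ is a two-sided inverse of $Q_\sigma$, establishing the claim.

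The only step that requires genuine care is the preservation of measurability, and it poses no real obstacle because $\sigma$ is continuous; hence the whole argument amounts to a pointwise transfer of the properties of $\sigma$ followed by integration over $X$.
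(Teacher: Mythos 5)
Your proposal is correct and follows the same overall architecture as the paper's proof: check that $Q_\sigma$ preserves measurability, obtain the isometry by a pointwise identity integrated over $X$, and get surjectivity by exhibiting $Q_{\sigma^{-1}}$ as a two-sided inverse. The one place where you diverge is the measurability step. The paper works with \emph{weak} measurability (a vector function $f$ is measurable when $x \mapsto \langle f(x), y\rangle_{\mathbb H_1}$ is measurable for every $y$) and transfers it in one line via the adjoint, writing $\langle \sigma(f(x)), z\rangle_{\mathbb H_2} = \langle f(x), \sigma^{-1}(z)\rangle_{\mathbb H_1}$ and noting that $\sigma^{-1}(z)$ ranges over all of $\mathbb H_1$. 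You instead assume \emph{strong} measurability and argue by approximation with simple functions, using the continuity of $\sigma$. Both are legitimate; in the paper's application the target spaces are separable (sequence spaces over countable index sets), so the two notions coincide by Pettis's theorem and nothing is lost either way. The adjoint trick is slightly shorter and avoids invoking an approximating sequence; your route is the standard Bochner-space argument and would be the natural one if $L^2(X,\mathbb H)$ were defined via strong measurability from the outset. The isometry and surjectivity steps are essentially identical to the paper's (the paper verifies preservation of inner products rather than just norms, but for a linear isometry these are equivalent by polarization).
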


\begin{proof}
Let $f$ be a measurable vector function in $L^2(X, \mathbb H_1)$, that is, for every $y\in \mathbb H_1$ the scalar function $x \longrightarrow \langle f(x) , y \rangle_{\mathbb H_1}$ is measurable. We must prove that $Q f$ is also a measurable vector function in $L^2(X, \mathbb H_2)$. For $z\in \mathbb H_2$ we have
$$
<Qf(x),z>_{\mathbb H_2}= <\sigma(f(x)),z>_{ \mathbb H_2}=<f(x),\sigma^*(z)>_{\mathbb H_1}.
$$ 
Since $\sigma^*(z) = \sigma^{-1}(z)$ is a general element of $\mathbb H_1$, this shows that $Q f$ is measurable.
Moreover, for $f,g \in L^2(X, \mathbb H_1)$,
\begin{align*}
<Qf,Qg>_{L^2(X, \mathbb H_2)} =&\int_X<\sigma(f(x)),\sigma(g(x))>_{\mathbb H_2} d\mu(x)\\
= &\int_X<f(x),(g(x)>_{\mathbb H_1} d\mu(x)=<f,g>_{L^2(X, \mathbb H_1)}\,.
\end{align*}
This shows that if $f\in L^2(X, \mathbb H_1)$, $Q_\sigma f \in L^2(X, \mathbb H_2)$ and that $ Q_\sigma$ is and isometry.

Finally, it is easy to see that $R: L^2(X, \mathbb H_2) \rightarrow L^2(X, \mathbb H_1)$ defined by $Rg(x) =\sigma^{-1}(g(x)$ is the inverse and the adjoint of $Q$. Therefore, $Q_\sigma$ is onto.
\end{proof}

\section{Acknowledgements}
This project has received funding from the European Union's Horizon 2020 research and innovation programme under the Marie Sk\l{}odowska-Curie grant agreement No 777822. 

In addition, D. Barbieri and E. Hern\'andez were supported by Grant MTM2016-76566-P (Ministerio de Ciencia, Innovaci\'on y Universidades, Spain). C. Cabrelli and U. Molter were supported by Grants UBACyT 20020170100430BA (University of Buenos Aires), PIP11220150100355 (CONICET) and PICT 2014-1480 (Ministerio de Ciencia, Tecnolog\'{i}a e Innovaci\'on, Argentina).

\end{document}